\newtheorem{prop}{Proposition}[section]
\newtheorem{defn}{Definition}[section]%If we dont put[thm] the numbering is done independently
\newcounter{alphthm}
\newtheorem{propriete}[alphthm]{Theorem}
\newtheorem{Corollary}[alphthm]{Corollary}
\newtheorem{thm}{Theorem}
\newtheorem{lem}{Lemma}[section]
\newcommand{\be}{\begin{equation}}
\newcommand{\ee}{\end{equation}}
\newcommand{\ben}{\begin{enumerate}}
\newcommand{\een}{\end{enumerate}}
\newcommand{\beq}{\begin{eqnarray}}
\newcommand{\eeq}{\end{eqnarray}}
\newcommand{\beqn}{\begin{eqnarray*}}
\newcommand{\eeqn}{\end{eqnarray*}}
\newcommand{\bpf}{\begin{proof}}
\newcommand{\epf}{\end{proof}}
\newcommand{\bl}{\begin{lem}}
\newcommand{\el}{\end{lem}}
\newcommand{\bp}{\begin{prop}}
\newcommand{\ep}{\end{prop}}
\newcommand{\bd}{\begin{defn}}
\newcommand{\ed}{\end{defn}}
\newcommand{\bt}{\begin{thm}}
\newcommand{\et}{\end{thm}}
\def\nn{\nonumber}
\newcommand\bpr{\begin{prop}}
\newcommand\epr{\end{prop}}
\title{Evolution of Ricci scalar under Finsler Ricci flow}
\author{B. Bidabad\thanks{The corresponding author, bidabad@aut.ac.ir}\, and\, M. K. Sedaghat}
\date{}
\begin{document}
\maketitle
\noindent

\begin{abstract}
Recently, we have studied evolution of a family of Finsler metrics along Finsler Ricci flow and proved its convergence in short time. Here,  evolution equation of the reduced $hh$-curvature and the Ricci scalar along the Finslerian Ricci flow is obtained and it is proved that the Ricci flow preserves positivity of reduced $hh$-curvature on finite time. Next, it is shown that the evolution of Ricci scalar is a parabolic-type equation and if the initial Finsler metric is of positive flag curvature, then the flag curvature and the Ricci scalar remain positive as long as the solution exists. Finally, a lower bound for the Ricci scalar along the Ricci flow is obtained.
 \end{abstract}
\vspace{.5cm}
{\footnotesize\textbf{Keywords:} Finsler, evolution, Ricci flow, Ricci scalar, maximum principle, sphere bundle. }\\
{\footnotesize\textbf{AMS subject classification}: {53C60, 53C44}}
%%%%%%%%%%%%%%%%%%%%%%%%%%%%%%%%%%%%%%%%%%%%%%%%%%%%%%%%%%%%%%%%%%%%%%%%%%%%%%%%%%%%%%%%%%%%%%%%%%%%%%%%%%%%
\section*{Introduction}
In the last decades, geometric flows and more notably among them, Ricci flow, are proved to be useful tools in the study of long standing conjectures in geometry and topology of the base Riemannian manifold. One of its important issues concerns discovering the so called round metrics (of constant curvature, Einstein, Solitons, .. etc.) on manifolds by evolving an initial Riemannian metric tensor to make it rounder and draw geometric and topological conclusions from the final round metric. Similarly, several natural questions are revealed in Finsler geometry, among them S. S. Chern's question stating that weather there exists a Finsler-Einstein metric on every smooth manifold.

Assembling an evolution equation in Finsler geometry contains a number of new conceptual and fundamental issues on relation with the different definitions of Ricci tensors, the existence problem and then geometrical and physical characterization. In \cite{Bao}, D. Bao based on the Akbar-Zadeh's Ricci tensor and in analogy with the Ricci flow in Riemannian geometry, has considered the following equation as Ricci flow in Finsler geometry
\begin{eqnarray}\label{RF}
\frac{\partial}{\partial t}\log F=-\mathcal{R}ic,\qquad F(0)=F_{0},
\end{eqnarray}
where, $F_{0}$ is the initial Finsler structure. This equation addresses the evolution of the Finsler structure $F$ and seems to make sense, as an unnormalized Ricci flow for Finsler spaces on both the manifolds of nonzero tangent vectors $TM_{0}$ and the sphere bundle $SM$. One of the advantages of (\ref{RF}) is its independence to the choice of Cartan, Berwald or Chern connections.

Recently, we have studied Finsler Ricci solitons as a self similar solutions to the Finsler Ricci flow and it was shown if there is a Ricci soliton on a compact Finsler manifold then there exists a solution to the Finsler Ricci flow equation and vice-versa, see \cite{BY}. 
 Next, as a first step to answer  Chern's question, we have considered evolution of a family of Finsler metrics, first under a general flow next under Finsler Ricci flow and prove that a family of Finsler metrics $g(t)$ which are solutions to the Finsler Ricci flow converge to a smooth limit Finsler metric as $t$ approaches the finite time $T$, see \cite{YB2}.
Moreover, a Bonnet-Myers type theorem was studied and it is proved that on a Finsler space, a forward complete shrinking Ricci soliton is compact if and only if the corresponding vector field is bounded, using which we have shown a compact shrinking Finsler Ricci soliton has finite fundamental group and hence the first de Rham cohomology group vanishes, see \cite{YB1}. The  existence of solution to the  evolution equation (\ref{RF}) in Finsler geometry, is also studied by the present authors in \cite{BKS}.

In the present work, we derive evolution equations for the reduced $hh$-curvature of Finsler structure $R(X,Z)$ and  the Ricci scalar $\mathcal{R}ic$ along the Ricci flow and show that the evolution of Ricci scalar is a parabolic type equation.
Next we show that if $(M,F(0))$ has positive reduced $hh$-curvature at the initial time  $t=0$ then, $(M,F(t))$ has positive reduced $hh$-curvature for all $t\in[0,T)$ and among the others show the following theorems.

\begin{thm}\label{asli4}
Let $(M^{n},F_{0})$ be a compact Finslerian manifold and $F(t)$ a solution to the evolution equation (\ref{RF}), satisfying a uniform bound for the Ricci tensor on a finite time interval $[0,T)$, where $F(0)=F_{0}$. If $(M,F(0))$ is of positive flag curvature, then $(M,F(t))$ has positive flag curvature and positive Ricci scalar for all $t\in[0,T)$.
\end{thm}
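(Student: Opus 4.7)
The plan is to split the theorem into its two assertions and derive each from machinery already set up earlier in the paper: (i) the preservation of positivity of the reduced $hh$-curvature under (\ref{RF}), and (ii) the parabolicity of the evolution equation for $\mathcal{R}ic$.

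For the flag-curvature conclusion I would start from the Finsler definition: for a flag $(P,y)$ with transverse edge $X$,
\[
K(P,y)=\frac{g_{y}(R(X,y)y,X)}{g_{y}(y,y)\,g_{y}(X,X)-g_{y}(X,y)^{2}},
\]
where the denominator is strictly positive by the Cauchy--Schwarz inequality for the positive-definite fundamental tensor $g_{y}$. Hence positivity of $K$ is equivalent (modulo a positive normalization) to positivity of the reduced $hh$-curvature $R(\cdot,y)y$. The hypothesis that $(M,F(0))$ has positive flag curvature therefore says that $F(0)$ has positive reduced $hh$-curvature, and the preservation theorem for the reduced $hh$-curvature stated just before this theorem then gives positivity of $R(\cdot,y)y$, hence of $K$, for every $t\in[0,T)$.

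For the positivity of $\mathcal{R}ic$ I would invoke a scalar parabolic minimum principle on the compact sphere bundle $SM$. Since $\mathcal{R}ic$ is zero-homogeneous in $y$, it descends to a smooth function on $SM$, and the evolution equation derived in the preceding section has the form $\partial_{t}\mathcal{R}ic = \Delta\mathcal{R}ic + (\text{lower-order terms})$, which is parabolic. Setting $u_{\min}(t):=\min_{SM}\mathcal{R}ic(\cdot,t)$, the assumed uniform bound on the Ricci tensor on $[0,T)$ controls the lower-order coefficients, so Hamilton-type ODE comparison applied to $u_{\min}$, together with $u_{\min}(0)>0$ coming from the positive-flag hypothesis (via $\mathcal{R}ic(y)=\sum_{i=1}^{n-1}K(e_{i}\wedge y,y)$ in a $g_{y}$-orthonormal frame), yields $u_{\min}(t)>0$ throughout $[0,T)$.

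The main obstacle I expect is running the maximum principle cleanly on $SM$ rather than on $M$: one has to identify horizontal and vertical pieces of the operator appearing in the evolution equation for $\mathcal{R}ic$, verify uniform parabolicity on the compact $SM$, and check that the lower-order terms are genuinely controlled by the uniform Ricci bound. A secondary, more routine point is confirming that the zero-homogeneity of $\mathcal{R}ic$ is preserved along the flow, so that working on $SM$ is legitimate for every $t\in[0,T)$. Once these are settled, the flag-curvature statement is an algebraic corollary of the already-proved preservation theorem, and the positivity of $\mathcal{R}ic$ is a direct application of the parabolic minimum principle, paving the way for the quantitative lower bound promised in the abstract.
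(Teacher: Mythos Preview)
Your argument for the flag curvature is exactly the paper's: positive flag curvature at $t=0$ is equivalent (via the definition (\ref{flag})) to positivity of the reduced $hh$-curvature $R_{g(0)}$, Proposition~\ref{asli2} then gives $R_{g(t)}>0$ for all $t\in[0,T)$, and the same definition read backwards yields positive flag curvature along the flow.

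Where you diverge is in the positivity of $\mathcal{R}ic$. The paper does not invoke the evolution equation for $\mathcal{R}ic$ or any minimum principle here at all; it simply observes that, by the definition (\ref{Ric}), $\mathcal{R}ic_{g(t)}=\sum_{\alpha=1}^{n-1}K(x,y,l\wedge e_{\alpha})$ is a finite sum of flag curvatures, each of which has just been shown to be positive for every $t\in[0,T)$. Positivity of $\mathcal{R}ic$ is therefore an immediate algebraic consequence of the flag-curvature conclusion, with no analysis required. Your route via the parabolic equation on $SM$ is not wrong---indeed it is precisely the machinery the paper sets up \emph{after} this theorem, in Proposition~\ref{asli3} and the proof of Theorem~\ref{asli5}, to obtain the quantitative lower bound $\mathcal{R}ic_{g(t)}\geq\alpha/(1+\alpha t)$---but for the bare positivity statement it is a substantial detour, and the obstacles you flag (identifying the operator on $SM$, checking uniform parabolicity, controlling lower-order terms) are all avoided by the one-line trace argument.
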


\begin{thm}\label{asli5}
Let $(M^{n},F_{0})$ be a compact Finslerian manifold and $F(t)$ a solution to the evolution equation (\ref{RF}), satisfying a uniform bound for the Ricci tensor on a finite time interval $[0,T)$, where $F(0)=F_{0}$. If $(M,F(0))$ has positive flag curvature and $\inf_{SM}\mathcal{R}ic_{g(0)}=\alpha>0$, then $\mathcal{R}ic_{g(t)}\geq \frac{\alpha}{1+\alpha t}$ for all $t\in[0,T)$.
\end{thm}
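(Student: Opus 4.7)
The plan is to establish a differential inequality of parabolic type for $\mathcal{R}ic$ along the flow and then apply an ODE/maximum-principle comparison on the compact sphere bundle $SM$. The starting point is the evolution equation for $\mathcal{R}ic$ already derived earlier in the paper, which is of the schematic form
\begin{equation*}
\frac{\partial}{\partial t}\mathcal{R}ic \;=\; \mathcal{L}\mathcal{R}ic + Q,
\end{equation*}
with $\mathcal{L}$ a second-order elliptic operator on $SM$ (the ``Finsler Laplacian''-type operator that makes the equation parabolic) and $Q$ a lower-order reaction term built from Finslerian curvature quantities. Using the uniform Ricci bound in the hypothesis together with the positivity given by Theorem \ref{asli4}, I would show that $Q \geq -\mathcal{R}ic^{\,2}$ pointwise on $SM$, yielding the key parabolic inequality
\begin{equation*}
\frac{\partial}{\partial t}\mathcal{R}ic \;\geq\; \mathcal{L}\mathcal{R}ic \;-\; \mathcal{R}ic^{\,2}.
\end{equation*}

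Next, I would apply the Finslerian analogue of Hamilton's scalar maximum principle on the compact base $SM$. Define $h(t) := \inf_{(x,y)\in SM}\mathcal{R}ic_{g(t)}(x,y)$, which is attained by compactness of $SM$. At any minimising point the horizontal and vertical gradients of $\mathcal{R}ic$ vanish and $\mathcal{L}\mathcal{R}ic \geq 0$, so the inequality above and a standard barrier / Dini-derivative argument give
\begin{equation*}
\frac{d h}{dt} \;\geq\; -\, h(t)^{2}, \qquad h(0)=\alpha>0.
\end{equation*}
Comparison with the ODE $\dot u = -u^{2}$, $u(0)=\alpha$, whose explicit solution is $u(t)=\alpha/(1+\alpha t)$, then finishes the proof: Theorem \ref{asli4} keeps $h(t)>0$ throughout $[0,T)$, so no singularity of the ODE is crossed and the comparison $h(t)\geq u(t)$ remains legitimate for all $t\in[0,T)$.

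The main obstacle is the first step: extracting the clean pointwise bound $Q \geq -\mathcal{R}ic^{\,2}$ from the Finslerian evolution equation. In contrast to the Riemannian case, where the reaction term for scalar curvature has a definite sign coming from $|\mathrm{Ric}|^{2}$, the Finsler reaction term couples the Ricci scalar with the Cartan and Chern tensors and mixes horizontal and vertical covariant derivatives; obtaining exactly the coefficient $-1$ in front of $\mathcal{R}ic^{2}$ will require careful book-keeping and use of the uniform bound on $\mathrm{Ric}$ assumed in the hypothesis. A secondary subtlety is verifying the weak maximum principle for $\mathcal{L}$ on $SM$, which is needed to pass from the PDE inequality to the ODE for $h(t)$; this is standard on a closed Riemannian manifold but requires care on the sphere bundle because of the horizontal/vertical splitting of $T(SM)$ in the Finslerian setting.
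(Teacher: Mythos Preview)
Your overall strategy---derive a parabolic differential inequality for $\mathcal{R}ic$ on $SM$, then apply a scalar minimum principle and compare with the ODE $\dot u=-u^{2}$---is exactly the paper's. However, you misidentify where the work lies. The evolution equation from Proposition~\ref{asli3} is simply
\[
\frac{\partial}{\partial t}\mathcal{R}ic \;=\; -F^{2}R^{ij}\,\frac{\partial^{2}\mathcal{R}ic}{\partial y^{i}\partial y^{j}},
\]
which, after passing to the $SM$-basis, becomes $\partial_{t}\mathcal{R}ic=-F^{2}R^{\alpha\beta}\dot\nabla_{\alpha}\partial_{\beta}\mathcal{R}ic+H^{\lambda}\partial_{\lambda}\mathcal{R}ic$: there is \emph{no} zero-order reaction term $Q$ at all. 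The inequality with $-\mathcal{R}ic^{2}$ on the right is obtained trivially by subtracting the nonnegative quantity $\mathcal{R}ic^{2}$ from an equality; no ``careful book-keeping'' and no use of the uniform Ricci bound are required for this step.

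The uniform bound on $Ric$ enters elsewhere: it feeds into Proposition~\ref{asli2}, which preserves positivity of the reduced $hh$-curvature and hence guarantees that $R^{\alpha\beta}(t)$ remains positive definite. \emph{That} is what makes the second-order operator elliptic and the weak minimum principle (Corollary~\ref{minimum principle}) applicable. So your anticipated ``main obstacle'' is a non-issue, while the genuine point---ellipticity of the principal part via preserved positive curvature---you only mention in passing. The operator is purely vertical (only $y$-derivatives appear), so on the compact $SM$ the cited scalar minimum principle applies without further subtlety.
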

%%%%%%%%%%%%%%%%%%%%%%%%%%%%%%%%%%%%
\section{Preliminaries and notations}
In order to study evolution equations in Finsler geometry, in analogy with Riemannian geometry, it is more convenient to use global definitions of curvature tensors. In the present work, whenever we are dealing with Cartan connection, we use notations and terminologies of \cite{HAZ}, otherwise we use those of \cite{BCS}.
Here and everywhere in this paper all  manifolds are supposed to be closed (compact and without boundary).
\subsection{Cartan connection on Finsler spaces}
Let $M$ be a real n-dimensional manifold of class $C^{\infty}$. We denote by $TM$ the tangent bundle of tangent vectors,  by  $\pi :TM_{0}\longrightarrow M$ the fiber bundle of non-zero tangent vectors and  by $\pi^*TM\longrightarrow TM_0$ the pull back tangent bundle.
%Let $(x,U)$ be a local chart on  $ M$ and $(x^i,y^i)$ be the induced local coordinates on $\pi^{-1}(U)$.  A \emph{Finsler structure} on M is a function $F: TM \longrightarrow [0,\infty )$, with the following properties:(i) $F$ is differentiable $C^{\infty}$ on $TM_{0}$; (ii) $F$ is positively homogeneous of degree one in $y$, that is,  $F(x,\lambda y)=\lambda F(x,y)$, for all $\lambda >0$;  (iii) The Finsler metric tensor $g$ defined by the Hessian matrix of $F^{2}$, $(g_{ij})=(\frac{1}{2}[\frac{\partial^{2}}{\partial y^{i}\partial y^{j}}F^{2}])$, is positive definite on $TM_{0}$. A \emph{Finsler manifold} is a pair $(M,F)$ consisting of a differentiable manifold $M$ and a Finsler structure $F$ on $M$.
%%%%%%%%%%%%%%%%%%%%%%%
Let $F$ be a Finsler structure on $TM_{0}$ and $g$ the related Finslerian metric. A \emph{Finsler manifold} is denoted here by the pair $(M,F)$. Any point of $TM_0$ is denoted by $z=(x,y)$, where $x=\pi z\in M$ and $y\in T_{\pi z}M$. We denote by $TTM_0$, the tangent bundle of $TM_0$ and by $\varrho$, the canonical
linear mapping $\varrho:TTM_0\longrightarrow \pi^*TM,$ where, $ \varrho=\pi_*$. For all $z\in TM_0$, let $V_zTM$ be the set of all vertical vectors at $z$, that is, the set of vectors which are tangent to the fiber through $z$.%Equivalently, $V_zTM=\ker \pi_*$
%where $\pi_*:TTM_0\longrightarrow TM$ is the linear tangent mapping.

Let  $\nabla:{\cal X}(TM_0)\times\Gamma(\pi^{*}TM)\longrightarrow\Gamma(\pi^{*}TM)$, be a linear connection. Consider the linear mapping
$\mu:TTM_0\longrightarrow \pi^*TM,$ by $\mu(\hat{X})=\nabla_{\hat{X}}u$ where, $\hat{X}\in TTM_0$ and $u=y^{i}\frac{\partial}{\partial x^{i}}$ is the canonical section of $\pi^*TM$.
The connection $\nabla$ is said to be {\it regular}, if $\mu$ defines an isomorphism between $VTM_0$ and
$\pi^*TM$. In this case, there is the horizontal distribution $HTM$ such that we have the Whitney sum $TTM_0=HTM\oplus VTM.$ This decomposition permits to write a vector field $\hat{X}\in {\cal X}(TM_0)$ into the horizontal and vertical form $\hat{X}=H\hat{X}+V\hat{X}$ uniquely. In the sequel, we will denote all the vector fields on $TM_0$ by $\hat{X}, \hat{Y}$, etc and the corresponding sections of $\pi^*TM$ by $X=\varrho(\hat X)$, $Y=\varrho(\hat Y)$, etc respectively, unless otherwise specified. The structural equations of the regular connection $\nabla$ are given by:
\begin{eqnarray*}
&&\tau(\hat{X},\hat{Y})=\nabla_{\hat{X}}Y-\nabla_{\hat{Y}}X-\varrho[\hat{X},\hat{Y}],\\
&&\Omega(\hat{X},\hat{Y})Z=\nabla_{\hat{X}}\nabla_{\hat{Y}}Z-\nabla_{\hat{Y}}\nabla_{\hat{X}}Z
-\nabla_{[\hat{X},\hat{Y}]}Z,
\end{eqnarray*}
where, $X=\varrho(\hat{X})$, $Y=\varrho(\hat{Y})$, $Z=\varrho(\hat{Z})$ and $\hat{X}$, $\hat{Y}$ and $\hat{Y}$ are vector fields on $TM_0$.
The torsion tensor $\tau$ and the curvature tensor $\Omega$ determine the two torsion tensors denoted here by $S$ and $T$ and the three
curvature tensors denoted by $R$, $P$ and $Q$ defined by:
\begin{eqnarray*}
&S(X,Y)=\tau(H\hat{X},H\hat{Y}),&\ \ \ T(\dot{X},Y)=\tau(V\hat{X},H\hat{Y}),\\
&R(X,Y)=\Omega(H\hat{X},H\hat{Y}),&\ \ \ P(X,\dot{Y})=\Omega(H\hat{X},V\hat{Y}),\\
&Q(\dot{X},\dot{Y})=\Omega(V\hat{X},V\hat{Y}),&\nonumber
\end{eqnarray*}
where, $X=\varrho(\hat{X})$,\ $Y=\varrho(\hat{Y})$,\
$\dot{X}=\mu(\hat{X})$ and $\dot{Y}=\mu(\hat{Y})$. The tensors $R$, $P$ and $Q$ are called $hh-$, $hv-$ and $vv-$curvature tensors, respectively.
There is a unique regular connection called the {\it Cartan connection} satisfying the metric compatibility and the $hh$-torsion freeness conditions in the following senses, see \cite{HAZ}.
\begin{align*}%\label{CartanConProperties}
&\nabla_{\hat{Z}}g=0,\\
&S(X,Y)=0,\\
&g(\tau(V\hat{X},\hat{Y}),Z)=g(\tau(V\hat{X},\hat{Z}),Y).
\end{align*}
Given an induced natural coordinates on $\pi^{-1}(U)$, we denote by $G^{i}$ the components of spray vector field on $TM$, where $G^{i}=\frac{1}{4}g^{ih}(\frac{\partial^{2}F^{2}}{\partial y^{h}\partial x^{j}}y^{j}-\frac{\partial F^{2}}{\partial x^{h}})$. The horizontal and vertical subspaces have the corresponding bases $\{\frac{\delta}{\delta {x^i}},\frac{\partial}{\partial y^{i}}\}$, which are related to the typical bases of $TM$ $\{\frac{\partial}{\partial x^{i}},\frac{\partial}{\partial y^{i}}\}$, by $\frac{\delta}{\delta {x^i}}:=\frac{\partial}{\partial x^{i}}-G_{i}^{j}\frac{\partial}{\partial y^{j}}$. The dual bases of the former basis are denoted by $\{dx^{i},\delta y^{i}\}$, where $\delta y^{i}:=dy^{i}+G_{j}^{i}dx^{j}$. The 1-form of Cartan connection in these bases are given by $\omega^{i}_{j}=\Gamma^{i}_{jk}dx^{k}+C^{i}_{jk}\delta y^{k}$, where $\Gamma^{i}_{jk}=\frac{1}{2}g^{ih}(\delta_{j}g_{hk}+\delta_{k}g_{jh}-\delta_{h}g_{jk})$, $C^{i}_{jk}=\frac{1}{2}g^{ih}\dot{\partial}_{h}g_{jk}$, $\delta_{k}=\frac{\delta}{\delta x^{k}}$ and $\dot{\partial}_{k}=\frac{\partial}{\partial y^{k}}$. In local coordinates, coefficients of the Cartan connection $\nabla$ are given by
$$\nabla_{k}\dot{\partial}_{j}=\Gamma^{i}_{jk}\dot{\partial}_{i},\hspace{0.4cm}
\dot{\nabla}_{k}\dot{\partial} _{j}=C^{i}_{jk}\dot{\partial}_{i},\hspace{0.4cm}
\nabla_{k}\delta_{j}=\Gamma^{i}_{jk}\delta_{i},\hspace{0.4cm} \dot{\nabla}_{k}\delta_{j}=C^{i}_{jk}\delta_{k},$$
where, $\nabla_{k}:=\nabla_{\frac{\delta}{\delta x^k}}$ and $\dot{\nabla}_{k}:=\nabla_{\frac{\partial}{\partial y^k}}$, see \cite{BCS}.
The components of Cartan horizontal and vertical covariant derivatives of a Finslerian $(1,2)$ tensor field $S$ on $\pi^{*}TM$ with the components $(S^{i}_{jk}(x,y))$ on $TM$ are given by
\begin{align}\label{COV}
&\nabla_{l}S^{i}_{jk}:= \delta_{l}S^{i}_{jk}-S^{i}_{s k}\Gamma^{s}_{jl}-S^{i}_{js}\Gamma^{s}_{kl}+S^{s}_{jk}\Gamma^{i}_{s l},\nonumber\\
&\dot{\nabla}_{l}S^{i}_{jk}:=\dot{\partial}_{l}S^{i}_{jk}-S^{i}_{s k}C^{s}_{jl}-S^{i}_{js}C^{s}_{kl}+S^{s}_{jk}C^{i}_{s l},
\end{align}
respectively. The horizontal and vertical metric compatibility of  Cartan connection  in local coordinates are written $\nabla_{l}g_{jk}=0$ and $\dot{\nabla}_{l}g_{jk}=0$, respectively. The horizontal covariant derivative of a $(0,2)$ tensor $T$ is written as follows
\begin{equation}\label{codev}
(\nabla_{H\hat{X}}T)(Y,Z)=\nabla_{H\hat{X}}T(Y,Z)-T(\nabla_{H\hat{X}}Y,Z)-T(Y,\nabla_{H\hat{X}}Z).
\end{equation}
%Let $T^{i_{1}...i_{p}}_{j_{1}...j_{q}}$ be a $(r,s)$ tensor field. One can deduce the following \emph{Ricci identity} or \emph{Interchange formula}, cf., \cite{HAZ}, page 20.
%\begin{align}\label{Ricciidentity}
%(\dot{\nabla}_{k}\dot{\nabla}_{l}-\dot{\nabla}_{l}\dot{\nabla}_{k})T^{i_{1}...i_{p}}_{j_{1}...j_{q}}
%&=\sum_{m=1}^{p}Q^{i_{m}}_{\,rkl}T^{i_{1}...r...i_{p}}_{j_{1}...j_{q}}
%-\sum_{n=1}^{q}Q^{r}_{\,j_{n}kl}T^{i_{1}...i_{p}}_{j_{1}...r...j_{q}}\nonumber\\
%&-y^{s}Q_{\,skl}^{r}\dot{\nabla}_{r}T^{i_{1}...i_{p}}_{j_{1}...j_{q}}
%\end{align}
%%%%%%%%%%%%%%%%%%%%%%%%
\subsection{The $hh$-curvature tensor of Cartan connection}
Let us consider the horizontal curvature operator
\begin{equation*}
R(X,Y)Z:=\Omega(H\hat{X},H\hat{Y})Z=\nabla_{H\hat{X}}\nabla_{H\hat{Y}}Z
-\nabla_{H\hat{Y}}\nabla_{H\hat{X}}Z
-\nabla_{[H\hat{X},H\hat{Y}]}Z,
\end{equation*}
where, $X,Y,Z \in\Gamma(\pi^{*}TM)$ and $\hat{X},\hat{Y}\in{\cal X}(TM_0)$.
The \emph{$hh$-curvature tensor} of Cartan connection is defined by $R(W,Z,X,Y):=g(R(X,Y)Z,W)$. Replacing $W$ with the local frame $\{e_k\}_{k=1}^n$ we get
\begin{align}\label{9+1}
R(X,Y)Z= \sum_{k=1}^n R(e_k,Z,X,Y)e_k.
\end{align}
One can check that the $hh$-curvature of Cartan connection is skew-symmetric with respect to the first two vector fields as well as the last two vector fields, see \cite{HAZ}, page 43. That is,
\begin{align*}
&R(X,Y,Z,W)=-R(Y,X,Z,W),\\
&R(X,Y,Z,W)=-R(X,Y,W,Z).
\end{align*}
In a local coordinate system we have
\begin{equation*}
R(\partial_{i},\partial_{j})\partial_{k}=\Omega(\delta_{i},\delta_{j})\partial_{k}=R_{\,\,kij}^{h}\partial_{h}.
\end{equation*}
Recall that the upper index is placed in the \emph{first} position, that is
\begin{equation*}
R_{tkij}:=g_{ht}R^{h}_{\,\,kij}=g(R(\partial_{i},\partial_{j})\partial_{k},\partial_{t}).
\end{equation*}
The components of Cartan $hh$-curvature tensor are given by
\begin{equation} \label{77}
R^{h}_{\,\,kij}=\delta_{i}\Gamma^{h}_{\,jk}-\delta_{j}\Gamma^{h}_{\,ik}+
\Gamma^{l}_{\,jk}\Gamma^{h}_{\,il}-\Gamma^{l}_{\,ik}\Gamma^{h}_{\,jl}
+R^{l}_{\,\,ij}C^{h}_{\,lk},
\end{equation}
where, $R^{l}_{\,\,ij}=y^{p} R^{l}_{\,\,pij}$. The \emph{reduced $hh$-curvature} is defined by
\begin{equation*}
R(X,Y,Z,W):=R(X,l,Z,l),
\end{equation*}
where, $l:=\frac{y^i}{F}\frac{\partial}{\partial x^i}$ is the distinguished global section. The reduced $hh$-curvature is a connection free tensor called also Riemann curvature by certain authors. In the local coordinates the reduced $hh$-curvature is given by $R^{i}_{\,\,k}:=\frac{1}{F^2}y^{j}R^{i}_{\,\,jkm}y^{m}$ which are entirely expressed in terms of $x$ and $y$ derivatives of spray coefficients $G^{i}$ as follows %and coincide with that of Chern connection, cf., \cite{BCS}, page 66.
\begin{equation} \label{18}
R^{i}_{\,\,k}:=-\frac{1}{F^2}(2\frac{\partial G^{i}}{\partial x^{k}}-\frac{\partial^{2}G^{i}}{\partial x^{j}\partial y^{k}}y^{j}+2G^{j}\frac{\partial^{2}G^{i}}{\partial y^{j}\partial y^{k}}-\frac{\partial G^{i}}{\partial y^{j}}\frac{\partial G^{j}}{\partial y^{k}}).
\end{equation}
Note that the components of reduced $hh$-curvature tensor in (\ref{18}) are different in a sign by that in \cite{BCS} page 66, using Chern connection.
%%%%%%%%%%%%%%%%%%%%%%
\subsection{Flag curvature and Ricci scalar}
%The notion of flag curvature is independent of the choice of connection.
Consider the vector field $l$, called the flagpole, and the unit vector $V=V^{i}\frac{\partial}{\partial x^{i}}\in\Gamma(\pi^{*}TM)$, called the transverse edge, which is perpendicular to the flagpole, the \emph{flag curvature}
is defined by
\begin{equation*}
K(x,y,l\wedge V):=V^{j}(l^{i}R_{jikl}l^{l})V^{k}=:V^{j}R_{jk}V^{k}.
\end{equation*}
If the transverse edge $V$ is orthogonal to the flagpole but not necessarily of unit length, then
\begin{equation}\label{flag}
K(x,y,l\wedge V)=\frac{V^{j}R_{jk}V^{k}}{g(V,V)}.
\end{equation}
The case in which $V$ is neither of unit length nor orthogonal to $l$ is treated in page 191, \cite{BCS}. The \emph{Ricci scalar} is defined as trace of the flag curvature i.e.
\begin{equation}\label{Ric}
\mathcal{R}ic:=\sum_{\alpha=1}^{n-1}K(x,y,l\wedge e_{\alpha}),
\end{equation}
where, $\{e_{1},...,e_{n-1},l\}$ is considered as a $g$-orthonormal basis for $T_{x}M$. Equivalently,
\begin{equation*}
\mathcal{R}ic=g^{ik}R_{ik}=R^{i}_{\,\,i},
\end{equation*}
where, $R^{i}_{\,\,k}$ are defined by (\ref{18}).
%%%%%%%%%%%%%%%%%%%%%%%%%
\subsection{A Riemannian connection on the indicatrix}\label{IND}
For a fixed point $x_{0}\in M$, the fiber $\pi^{-1}(x_0)=T_{x_0}M$ is a submanifold of $TM_0$ with the Riemannian metric $\tilde{g}(X,Y)=g_{ij}(x_{0},y)dy^{i}dy^{j}(X,Y)$ determined by the vertical part of Sasakian metric on $TM$ where, $X,Y\in V_{z}T_{x_0}M$. The hyper-surface $S_{x_0}M=\{y\in T_{x_0}M:F(x_{0},y)=1\}$ of $T_{x_0}M$ is called \emph{indicatrix} in $x_0\in M$. On the other hand a hyper-surface $S_{x_0}M$ can be expressed in local coordinates by the coordinate functions
\begin{equation*}
y^{i}=y^{i}(t^{\alpha}),
\end{equation*}
where, the Greek letters $\alpha,\beta,\gamma,...$ run over the range $1,...,n-1$ and the Latin letters $i,j,k,...$ run over the range $1,...,n$.
Let $f$ be a real function defined on $S_{x_0}M$. By chain rule we have $df(y(t))=\partial_{\alpha}fdt^{\alpha}$ where,
\begin{equation}\label{basis}
\partial_{\alpha}=y^{i}_{\alpha}F\dot{\partial}_{i},\quad y^{i}_{\alpha}=\frac{\partial y^i}{\partial t^\alpha}.
\end{equation}
Hence $\partial_{\alpha}$, define $(n-1)$ tangent vectors on $S_{x_0}M$. The induced Riemannian metric tensor $g_{\alpha\beta}$ on $S_{x_0}M$ is given by
\begin{align*}%\label{Eq;44}
g_{\alpha\beta}=g_{ij}y^{i}_{\alpha}y^{j}_{\beta},
\end{align*}
where, $g_{ij}(x_{0},y)$ are the components of Riemannian metric tensor on $T_{x_0}M$. Let $\dot{y}=y^j\dot{\partial}_{j}$ be a vector field tangent to the fiber through $z=\pi^{-1}(x_{0})$. Partial derivatives of $F^{2}(x,y)=1$ with respect to $y^{i}$, yields
\begin{equation}\label{orthogonal}
g_{ij}y^jy^{i}_{\alpha}=g(\partial_{\alpha},\dot{y})=0.
\end{equation}
Therefore, $\dot{y}$ is normal to the $(n-1)$ tangent vectors $y^{i}_{\alpha}$ of $S_{x_0}M$ and hence the pair $(y^{i}_{\alpha},\dot{y})$ defines $n$ linearly independent tangent vector fields on $T_{x_0}M$. We denote by $\dot{D}_{\dot{\partial}_k}$ the corresponding Riemannian covariant derivative on $(T_{x_{0}}M,\tilde{g})$, where the coefficients are given by
\begin{equation*}%\label{5}
\dot{D}_{\dot{\partial}_k}\dot{\partial}_j=C^{i}_{jk}(x_{0},y)\dot{\partial}_{i}.
\end{equation*}
Let $\dot{\nabla}$ be the induced connection on $(S_{x_0}M,g_{\alpha\beta})$. Relation between $\dot{D}$ and $\dot{\nabla}$ is given by the Gaussian formula
\begin{equation*}%\label{8}
\dot{D}_{Y}X=\dot{\nabla}_{Y}X-\tilde{g}(X,Y)\dot{y},
\end{equation*}
where, $X,Y\in T_{z}(S_{x_0}M)$. Replacing $X$ and $Y$ by the basis fields $\partial_{\alpha}$ and $\partial_{\beta}$ yields
\begin{equation}\label{9}
\dot{\nabla}_{\beta}y^{i}_{\alpha}=-A^{i}_{jk}y^{j}_{\alpha}y^{k}_{\beta}-g_{\alpha\beta}y^{i},
\end{equation}
where, $A^{i}_{jk}=FC^{i}_{jk}$, see \cite{HAZ}, pages 147-149.
%%%%%%%%%%%%%%%%%%%%%%%%%
\subsection{Local basis on the unitary sphere bundle $SM$}
Consider the sphere bundle $SM:=TM/\sim$ as a quotient space, where the equivalent relation is defined by $y\sim y'$ if and only if $y=\lambda y'$ for some $\lambda>0$. Given any $(x,y)\in TM$, we shall denote its equivalence class as a point in $SM$ by $(x,[y])\in SM$.
%In other words, $SM$ is the bundle of all directions or rays.
The natural projection $p:SM\longrightarrow M$ pulls back the tangent bundle $TM$ to an n-dimensional vector bundle $p^{*}TM$ over the $2n-1$ dimensional base $SM$.
% Namely, over each point $(x,[y])$ we erect a single copy of $T_{x}M$ and endow it with the inner product $g_{ij}(x,y)dx^{i}\otimes dx^{j}$. The resulting vector bundle $p^{*}TM$ has the fiber dimension $n$ as in $\pi^*TM$, but now it sits over the $2n-1$ dimensional sphere bundle $SM$ rather than $TM$. The local coordinates $x^{1},...,x^{n}$ on $M$ induce a global coordinates $y^1,...,y^n$ on each fiber $T_{x}M$, through the expansion $y=y^{i}\frac{\partial}{\partial x^i}$. Thus $(x^i;y^i)$ is a coordinate system on $SM$ with the $y^i$ regarded as homogeneous coordinates in the projective space sense.
Given local coordinates $(x^i)$ on $M$, we shall economize on notation and regard the corresponding collections $\{\frac{\partial}{\partial x^i}\}$, $\{dx^i\}$ as local bases for the pull back bundle $p^{*}TM$ and its dual $p^{*}T^{*}M$, respectively.

Let $\{e_{a}=u_{a}^{i}\frac{\partial}{\partial x^i}\}$ be a local orthonormal frame for $p^{*}TM$ and $\{\omega^a=v^{a}_{i}dx^i\}$ its co-frame, where $\omega^{a}(e_{b})=\delta^{a}_{b}$. Clearly we have $e_{n}:=l$, where $l=\frac{y^i}{F}\frac{\partial}{\partial x^i}$ is the distinguished global section and $\omega^{n}=\frac{\partial F}{\partial y^i}dx^{i}$.
%Here $\frac{\partial}{\partial x^i}$ stands for the section $([y],\frac{\partial}{\partial x^i})$
Also we have $\frac{\partial}{\partial x^{i}}=v^{a}_{i}e_{a}$ and $dx^{i}=u^{i}_{a}\omega^{a}$ where, relation between $(u^i_a)$ and $(v^a_i)$ are given by $v^a_iu_b^i=\delta^a_b$ and $u^i_av^a_j=\delta^i_j$.
For convenience, we shall also regard the $e_{a}$'s and $\omega^a$'s as local vector fields and 1-forms, respectively on $SM$, see \cite{BAOS}. Let us define
\begin{eqnarray*}
&&\hat{e}_{a}=u^{i}_{a}\frac{\delta}{\delta x^{i}},\quad
\hat{e}_{n+\alpha}=u^{i}_{\alpha}F\frac{\partial}{\partial y^i},\\
&&\omega^{a}=v^{a}_{i}dx^{i},\quad
\omega^{n+\alpha}=v^{\alpha}_{i}\frac{\delta y^i}{F}.
\end{eqnarray*}
It can be shown that $\{\hat{e}_{a},\hat{e}_{n+\alpha}\}$ and $\{\omega^{a},\omega^{n+\alpha}\}$ are local basis for the tangent bundle $TSM$ and the cotangent bundle $T^{*}SM$, respectively, where the Latin indices $a,b,...$ run over the range $1,...,n$ and the Greek indices run over the range $1,...,n-1$. Tangent vectors on $SM$ which are annihilated by all $\{\omega^{n+\alpha}\}$'s form the horizontal sub-bundle $HSM$ of $TSM$. The fibers of $HSM$ are $n$-dimensional and $\{\hat{e}_{a}\}$ is a local basis for the fibers of $HSM$. On the other hand, let $VSM:=\cup_{x\in M}T(S_{x}M)$ be the vertical sub-bundle of $TSM$. Its fibers are $n-1$ dimensional and $\{\hat{e}_{n+\alpha}\}$ is a local basis for the fibers of $VSM$. Here, $\hat{e}_{n+\alpha}$ coincide with $\partial_{\alpha}$ previously mentioned in Subsection \ref{IND}. The decomposition $TSM=HSM\oplus VSM$ holds well because $HSM$ and $VSM$ are directly summed, see \cite{BAOS}.
%%%%%%%%%%%%%%%%%%%%%%%%

\subsection{ Ricci tensors and Ricci flows in Finsler space}
There are several well known definitions for Ricci tensor in Finsler geometry. For instance, H. Akbar-Zadeh has considered two Ricci tensors on Finsler manifolds in his works namely, one is defined by $ Ric_{ij}:=[\frac{1}{2}F^{2}\mathcal{R}ic]_{y^{i}y^{j}}$ and another by
 $Rc_{ij}:=\frac{1}{2} (\textsf{R}_{ij}+\textsf{R}_{ji})$, where $ \textsf{R}_{ij}$ is the trace of $hh$-curvature of Cartan connection defined by $ \textsf{R}_{ij}=R^l_{\ ilj}$.
 D. Bao based on the first definition of Ricci tensor has considered the following Ricci flow in Finsler geometry,
\begin{equation}\label{20002}
\frac{\partial}{\partial t}g_{jk}(t)=-2Ric_{jk},\hspace{0.6cm}g_{(t=0)}=g_{0},
\end{equation}
where, $g_{jk}(t)$ is a family of Finslerian metrics defined on $\pi^{*}TM\times[0,T)$.
Contracting (\ref{20002}) with $y^{j}y^{k}$, via Euler's theorem, leads to $\frac{\partial}{\partial t}F^{2}=-2F^{2}\mathcal{R}ic$. That is,
\begin{equation} \label{20} 
\frac{\partial}{\partial t}\log F(t)=-\mathcal{R}ic,\hspace{0.6cm}F_{(t=0)}=F_{0},
\end{equation}
where, $F_{0}$ is the initial Finsler structure, see \cite{Bao}. It can be easily verified that (\ref{20002}) and (\ref{20}) are equivalent. This Ricci flow is used in \cite{BKS, BY, YB2, YB1, SL}. Here and everywhere in the present work  we consider the first Akbar-Zadeh's definition of Ricci tensor and the related  Ricci flow studied by D. Bao.
%providing that it has a family of Finslerian solution.
%\begin{equation*}% \label{19}
%Ric_{jk}:=[\frac{1}{2}F^{2}\mathcal{R}ic]_{y^{j}y^{k}}.
%\end{equation*}

One of the advantages of the Ricci quantity used here is its independence on the choice of Cartan, Berwald or Chern(Rund) connections. Another preference of this Ricci tensor is the  parabolic form of its Ricci scalar's evolution in the sense given in Proposition \ref{asli3}.

We say that the Ricci tensor has a \emph{uniform bound} if there is a constant $K$ such that $\parallel Ric_{(x,y,t)}\parallel_{g(t)}\leq K$, where $\parallel .\parallel_{g(t)}$ is the norm defined by $g(t)$.
%%%%%%%%%%%%%%%%%%%%%%%%%%%%%%%
\subsection{Statement of the maximum principle}
We recall here the weak maximum principle states that the extremum of solutions to elliptic equations are dominated by their extremum on the boundary, more intuitively we have the following theorem.
\begin{propriete}\label{maximum principle}
\cite{TAPP} (Weak maximum principle for scalars). Let $M$ be a closed manifold. Assume, for $t\in[0,T]$, where $0<T<\infty$, that $g(t)$ is a smooth family of metrics on $M$, and $X(t)$ is a smooth family of vector fields on $M$. Let $f:\mathbb{R}\times[0,T]\longrightarrow \mathbb{R}$ be a smooth function. Suppose that $u\in C^{\infty}(M\times[0,T],\mathbb{R})$ solves
\begin{equation*}
\frac{\partial u}{\partial t}\leq\Delta_{g(t)}u+<X(t),\nabla u>+f(u,t).
\end{equation*}
Suppose further that $\phi:[0,T]\longrightarrow \mathbb{R}$ solves
\begin{equation*}
\left\{
\begin{array}{l}
\frac{d \phi}{d t}=f(\phi(t),t),\cr
\phi(0)=\alpha\in \mathbb{R}.
\end{array}
\right.
\end{equation*}
If $u(\,.\,,0)\leq\alpha$, then $u(\,.\,,t)\leq\phi(t)$ for all $t\in [0,T].$
\end{propriete}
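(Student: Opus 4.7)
The plan is to prove this by a perturbation-comparison argument: approximate $\phi$ from above by strict supersolutions $\phi_\varepsilon$ of the ODE, show $u<\phi_\varepsilon$ on all of $M\times[0,T]$ by a first-contact argument, and then let $\varepsilon\to 0$. Concretely, for each $\varepsilon>0$ let $\phi_\varepsilon$ solve
\begin{equation*}
\tfrac{d}{dt}\phi_\varepsilon=f(\phi_\varepsilon,t)+\varepsilon,\qquad \phi_\varepsilon(0)=\alpha+\varepsilon,
\end{equation*}
on $[0,T]$; continuous dependence on parameters for ODEs gives $\phi_\varepsilon\to\phi$ uniformly on $[0,T]$, so it suffices to establish $u(\,\cdot\,,t)<\phi_\varepsilon(t)$ for every $\varepsilon>0$ and pass to the limit.

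Fix $\varepsilon>0$ and define
\begin{equation*}
S_\varepsilon=\{\,\tau\in[0,T]\,:\,u(x,t)<\phi_\varepsilon(t)\ \text{for all }x\in M,\ t\in[0,\tau]\,\}.
\end{equation*}
Since $u(\,\cdot\,,0)\le\alpha<\alpha+\varepsilon=\phi_\varepsilon(0)$ and $M$ is compact, continuity of $u$ and $\phi_\varepsilon$ gives a neighbourhood of $0$ in $S_\varepsilon$. Suppose for contradiction that $S_\varepsilon\ne[0,T]$ and let $t_0=\sup S_\varepsilon\in(0,T]$. Compactness of $M$ produces a point $x_0\in M$ at which $u(x_0,t_0)=\phi_\varepsilon(t_0)$, while $u(\,\cdot\,,t)<\phi_\varepsilon(t)$ on $M\times[0,t_0)$. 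At this first-contact point, $x\mapsto u(x,t_0)$ attains a global maximum, so
\begin{equation*}
\nabla u(x_0,t_0)=0,\qquad \Delta_{g(t_0)}u(x_0,t_0)\le 0,
\end{equation*}
and since the difference $u(x_0,\cdot)-\phi_\varepsilon$ is nonpositive on $[0,t_0]$ and vanishes at $t_0$, one has $\frac{\partial u}{\partial t}(x_0,t_0)\ge \phi_\varepsilon'(t_0)=f(\phi_\varepsilon(t_0),t_0)+\varepsilon$. Plugging into the hypothesized differential inequality,
\begin{equation*}
f(\phi_\varepsilon(t_0),t_0)+\varepsilon\le\tfrac{\partial u}{\partial t}(x_0,t_0)\le \Delta_{g(t_0)}u(x_0,t_0)+\langle X(t_0),\nabla u(x_0,t_0)\rangle+f(u(x_0,t_0),t_0)\le f(\phi_\varepsilon(t_0),t_0),
\end{equation*}
where the last equality uses $u(x_0,t_0)=\phi_\varepsilon(t_0)$. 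This contradicts $\varepsilon>0$, hence $S_\varepsilon=[0,T]$; letting $\varepsilon\to 0^+$ yields $u\le\phi$ on $M\times[0,T]$.

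The only subtle point I anticipate is the justification that the one-sided time derivative inequality $\frac{\partial u}{\partial t}(x_0,t_0)\ge\phi_\varepsilon'(t_0)$ holds at the first-contact time; this is where the perturbation $\varepsilon$ is crucial, since without it the inequality would only be $\ge$ with equality a priori possible and the argument could not be closed in one step. Compactness of $M$ (ensuring that a maximum is attained) and the $C^\infty$ regularity of $u$ (so that $\nabla u$, $\Delta u$, and $\partial_t u$ exist classically at $(x_0,t_0)$) are the two standing assumptions that make the argument go through; local Lipschitz dependence of $f$ in its first argument is used only tacitly, to guarantee that $\phi_\varepsilon$ extends to all of $[0,T]$ for small $\varepsilon$.
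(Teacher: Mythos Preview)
Your argument is the standard perturbation/first-contact proof and is correct as written; the only cosmetic point is that the closing remark slightly mislocates the role of $\varepsilon$ (it is needed not for the one-sided inequality $\partial_t u(x_0,t_0)\ge\phi_\varepsilon'(t_0)$, which holds regardless, but to turn the final chain into a \emph{strict} inequality and hence a contradiction). Note, however, that the paper does not supply its own proof of this statement: it is quoted verbatim from Topping's lecture notes \cite{TAPP} as a background tool, so there is no ``paper's proof'' to compare against---your argument is essentially the one given in that reference.
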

By applying this result when the signs of $u$, $\phi$ and $\alpha$ are reversed and $f$ is appropriately modified, we find the following modification:
\begin{Corollary}\label{minimum principle}
\cite{TAPP} (Weak minimum principle). Theorem \ref{maximum principle} also holds with the sense of all three inequalities reversed, that is, replacing all three instances of $\leq$ by $\geq$.
\end{Corollary}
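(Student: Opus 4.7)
The plan is to reduce the weak minimum principle to the already stated weak maximum principle (Theorem \ref{maximum principle}) by a change of sign. Suppose $u\in C^{\infty}(M\times[0,T],\mathbb{R})$ satisfies
\[
\frac{\partial u}{\partial t}\geq \Delta_{g(t)}u+\langle X(t),\nabla u\rangle+f(u,t),
\]
while $\phi:[0,T]\to\mathbb{R}$ solves $\tfrac{d\phi}{dt}=f(\phi(t),t)$ with $\phi(0)=\alpha$, and $u(\cdot,0)\geq\alpha$. First I would introduce the auxiliary data $\tilde u:=-u$, $\tilde\phi:=-\phi$, $\tilde\alpha:=-\alpha$, together with the modified reaction term $\tilde f(s,t):=-f(-s,t)$, which is again smooth because $f$ is.

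Next I would verify that $(\tilde u,\tilde\phi,\tilde\alpha,\tilde f)$ satisfies the hypotheses of the weak maximum principle with the same family of metrics $g(t)$ and the same vector field $X(t)$. Exploiting linearity of $\partial_t$, $\Delta_{g(t)}$, $\nabla$, and $\langle\cdot,\cdot\rangle$ in the scalar argument, the assumed differential inequality for $u$ becomes
\[
\frac{\partial\tilde u}{\partial t}\leq\Delta_{g(t)}\tilde u+\langle X(t),\nabla\tilde u\rangle+\tilde f(\tilde u,t),
\]
while a direct computation shows that
\[
\frac{d\tilde\phi}{dt}=-f(\phi(t),t)=-f(-\tilde\phi(t),t)=\tilde f(\tilde\phi(t),t),\qquad \tilde\phi(0)=\tilde\alpha,
\]
and the reversed initial condition $u(\cdot,0)\geq\alpha$ becomes $\tilde u(\cdot,0)\leq\tilde\alpha$.

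Finally, I would invoke Theorem \ref{maximum principle} applied to $\tilde u$, obtaining $\tilde u(\cdot,t)\leq\tilde\phi(t)$ on $M\times[0,T]$; multiplying through by $-1$ recovers $u(\cdot,t)\geq\phi(t)$, which is the desired conclusion. There is no substantive obstacle in this argument; the only delicate point is getting the precise form of the modified reaction $\tilde f(s,t)=-f(-s,t)$ right, chosen so that the ODE satisfied by $\tilde\phi$ matches exactly the reaction term appearing in the inequality for $\tilde u$ after the sign flip.
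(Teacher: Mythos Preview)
Your argument is correct and is exactly the approach the paper indicates: the paper does not give a detailed proof of this corollary but simply remarks, just before stating it, that one obtains the result ``by applying this result when the signs of $u$, $\phi$ and $\alpha$ are reversed and $f$ is appropriately modified.'' Your proposal carries out precisely that sign change, with the correct modification $\tilde f(s,t)=-f(-s,t)$.
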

%%%%%%%%%%%%%%%%%%%%%%%%%%%%%%%
\section{Evolution of the reduced curvature tensor}
In this section, we derive evolution equation for the reduced $hh$-curvature $R(X,Z)$ along the Ricci flow and show that if $(M,F(0))$ has positive reduced $hh$-curvature at the initial time, namely, $R_{g(0)}> 0$, then $(M,F(t))$ has positive reduced $hh$-curvature $R_{g(t)}>0$ for all $t\in[0,T)$. %We recall that a symmetric tensor $T_{ij}>0$ if $T_{ij}V^{i}V^{j}>0$ for all vectors $V$.\\
Let $X$ and $Y$ be two fixed sections of the pulled back bundle $\pi^{*}TM$ in the sense that $X$ and $Y$ are independent of $t$ and define $A(X,Y):=\frac{\partial}{\partial t}(\nabla_{H\hat{X}}Y)$. Now we are in a position to prove the following proposition.
\begin{prop}
Let $Z,X\in\Gamma(\pi^{*}TM)$ be two fixed vector fields on $TM_{0}$. Then
\begin{equation}\label{eq.1}
\frac{\partial}{\partial t}
(F^{2}R(Z,X))=-2\sum_{k=1}^{n}F^{2}R(e_{k},X)Ric(e_{k},Z),
\end{equation}
where, $R(Z,X)=\frac{1}{F^{2}}R(Z,u,X,u)$ is the reduced $hh$-curvature and $\big\{e_{k}\big\}_{k=1}^{n}$ is an orthonormal basis for $\pi^{*}TM$.
\end{prop}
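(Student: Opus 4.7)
The plan is to rewrite $F^{2}R(Z,X)$ as a metric inner product and differentiate, with the evolution of the metric producing the right-hand side of (\ref{eq.1}) and the evolution of the curvature operator cancelling via the specific algebraic structure of Bao's Ricci flow.

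First, by the definition of the reduced $hh$-curvature and the convention $R(W,Z',X,Y)=g(R(X,Y)Z',W)$ from Section~1, I would rewrite
$$F^{2}R(Z,X)=R(Z,u,X,u)=g\bigl(R(X,u)u,\,Z\bigr).$$
Since $Z$, $X$ and the canonical section $u=y^{i}\partial_{i}$ are $t$-independent, the product rule gives
$$\partial_{t}\bigl(F^{2}R(Z,X)\bigr)=(\partial_{t}g)\bigl(R(X,u)u,Z\bigr)+g\bigl(\partial_{t}[R(X,u)u],Z\bigr).$$
The Ricci flow equation $\partial_{t}g=-2Ric$ turns the first term into $-2Ric\bigl(R(X,u)u,Z\bigr)$. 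Expanding $R(X,u)u=\sum_{k}g(R(X,u)u,e_{k})e_{k}$ in the orthonormal frame $\{e_{k}\}$ and using bilinearity of $Ric$ rewrites it as
$$-2\sum_{k=1}^{n}g\bigl(R(X,u)u,e_{k}\bigr)Ric(e_{k},Z)=-2\sum_{k=1}^{n}F^{2}R(e_{k},X)\,Ric(e_{k},Z),$$
which is exactly the right-hand side of (\ref{eq.1}).

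Proving the proposition therefore reduces to showing that $g\bigl(\partial_{t}[R(X,u)u],Z\bigr)=0$, and this is the main obstacle. My plan is to proceed through the variation of the Cartan connection. Expanding
$$R(X,u)u=\nabla_{H\hat{X}}\nabla_{H\hat{u}}u-\nabla_{H\hat{u}}\nabla_{H\hat{X}}u-\nabla_{[H\hat{X},H\hat{u}]}u$$
and differentiating in $t$ produces a Finsler analogue of Hamilton's variation formula for the curvature operator, expressing $\partial_{t}R(X,u)u$ as a combination of the quantity $A(X,Y):=\partial_{t}(\nabla_{H\hat{X}}Y)$ and its horizontal derivatives. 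In parallel, in local coordinates $R(X,u)u=R^{h}_{\,kij}X^{i}y^{j}y^{k}\partial_{h}$ collapses via (\ref{18}) to $F^{2}R^{h}_{\,i}X^{i}\partial_{h}$, a quantity depending only on the spray $G^{i}$; so one can equivalently compute $\partial_{t}[F^{2}R^{h}_{\,i}]$ by differentiating $G^{i}=\tfrac{1}{4}g^{ih}(y^{j}\partial_{y^{h}x^{j}}F^{2}-\partial_{x^{h}}F^{2})$ and substituting $\partial_{t}F^{2}=-2F^{2}\mathcal{R}ic$ together with $\partial_{t}g^{ij}=2Ric^{ij}$.

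Either route leads to a combination of mixed $x$- and $y$-derivatives of $F^{2}\mathcal{R}ic$ coming through the spray variation and through the nonlinear pairings in (\ref{18}). The algebraic input that forces this combination to be $g$-orthogonal to $Z$ is the Akbar-Zadeh identity $Ric_{ij}=[\tfrac{1}{2}F^{2}\mathcal{R}ic]_{y^{i}y^{j}}$, which is the structural property the authors emphasize as the advantage of Bao's Ricci flow. Once this cancellation is verified, $g\bigl(\partial_{t}[R(X,u)u],Z\bigr)=0$ and (\ref{eq.1}) follows.
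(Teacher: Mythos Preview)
Your decomposition via the product rule and the identification of the first term with the right-hand side of (\ref{eq.1}) are exactly what the paper does. The difference is in how the second term $g\bigl(\partial_{t}[R(X,u)u],Z\bigr)$ is dispatched, and here you are making the problem much harder than it is and invoking the wrong mechanism.

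The paper first derives, for \emph{arbitrary} fixed $W,X,Y,Z$, the variation formula
\[
\partial_{t}R(Z,W,X,Y)=-2\sum_{k}R(e_{k},W,X,Y)\,Ric(e_{k},Z)
+g\bigl((\nabla_{H\hat X}A)(Y,W),Z\bigr)-g\bigl((\nabla_{H\hat Y}A)(X,W),Z\bigr),
\]
using only the horizontal torsion-freeness of the Cartan connection to reorganise the $A$-terms. Then one sets $Y=W=u$. The crucial observation is that the canonical section satisfies $\nabla_{H\hat X}u=0$ for \emph{every} $t$, so
\[
A(X,u)=\partial_{t}\bigl(\nabla_{H\hat X}u\bigr)=0\qquad\text{for all }X.
\]
Since $A(\,\cdot\,,u)\equiv 0$ and $\nabla_{H\hat X}u=0$, both $(\nabla_{H\hat X}A)(u,u)$ and $(\nabla_{\hat u}A)(X,u)$ vanish identically, and the two extra terms disappear. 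No computation with spray coefficients, and no use of the Akbar--Zadeh identity $Ric_{ij}=[\tfrac12 F^{2}\mathcal{R}ic]_{y^{i}y^{j}}$, is needed here; indeed, this cancellation holds for \emph{any} variation $\partial_{t}g$, not just Bao's Ricci flow. The specific flow enters only through $\partial_{t}g=-2Ric$ in the first term, which you already handled correctly.

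So the gap in your proposal is not a wrong structure but a missed simplification: you announce a hard cancellation you have not carried out, and attribute it to the wrong ingredient. Once you note $A(X,u)=0$, the proof is complete in one line.
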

\begin{proof}
Let $W,Z\in\Gamma(\pi^{*}TM)$ and $\hat{X},\hat{Y}\in{\cal X}(TM_{0})$ be fixed vector fields on $TM$. By definition of the $hh$-curvature tensor and the equations (\ref{9+1}) and (\ref{20002})  we have
\begin{align*}
\frac{\partial}{\partial t}(R(Z,W,X,Y))&=\frac{\partial}{\partial t}(g(R(X,Y)W,Z))\\
&=(\frac{\partial}{\partial t} g)(R(X,Y)W,Z)+g(\frac{\partial}{\partial t}R(X,Y)W,Z)\\
&=-2Ric(\sum_{k=1}^{n}R(e_{k},W,X,Y)e_{k},Z)\\
&\ \ +g\Big(\frac{\partial}{\partial t}(\nabla_{H\hat{X}}\nabla_{H\hat{Y}}W-\nabla_{H\hat{Y}}\nabla_
{H\hat{X}}W-\nabla_{[H\hat{X},H\hat{Y}]}W),Z\Big ).
\end{align*}
Using the notation
 $A(X,Y)=\frac{\partial}{\partial t}(\nabla_{H\hat{X}}Y)$ leads
\begin{align*}
\frac{\partial}{\partial t}(R(Z,W,X,Y))&=-2\sum_{k=1}^{n}R(e_{k},W,X,Y)Ric(e_{k},Z)+g\Big(A(X,\nabla_{H\hat{Y}}W),Z\Big)\\
&\quad+g\Big(\nabla_{H\hat{X}}(A(Y,W)),Z\Big)-g\Big(A(Y,\nabla_{H\hat{X}}W),Z\Big)\\
&\quad-g\Big(\nabla_{H\hat{Y}}(A(X,W)),Z\Big)-g\Big(A(\rho[H\hat{X},H\hat{Y}],W),Z\Big).
\end{align*}
By means of the horizontal torsion freeness $S(X,Y)=0$, we have $\nabla_{H\hat{X}}W-\nabla_{H\hat{W}}X =\rho[H\hat{X},H\hat{W}]$. Applying the horizontal covariant derivative (\ref{codev}) to $A$, the above equation leads to
\begin{align}\label{eq.2}
\frac{\partial}{\partial t}(R(Z,W,X,Y))&=-2\sum_{k=1}^{n}R(e_{k},W,X,Y)Ric(e_{k},Z)+g\Big((\nabla_{H\hat{X}}A)(Y,W),Z\Big)\nonumber\\
&\quad-g\Big((\nabla_{H\hat{Y}}A)(X,W),Z\Big).
\end{align}
Let $u=y^{i}\frac{\partial}{\partial x^{i}}$ be the canonical section. Since its horizontal derivative vanishes, namely $\nabla_{H\hat X}u=0$, we have
\begin{equation*}
g((\nabla_{H\hat{X}}A)(u,u),Z)=g((\nabla_{\hat{u}}A)(X,u),Z)=0,
\end{equation*}
where, $\hat{u}=y^{i}\frac{\delta}{\delta x^{i}}$. Therefore, letting $Y=W=u$ and using $R(Z,u,X,u)=F^2R(Z,X)$ the equation (\ref{eq.2}) reduces to
\begin{equation*}
\frac{\partial}{\partial t}(F^{2}R(Z,X))=-2\sum_{k=1}^{n}F^{2}R(e_{k},X)Ric(e_{k},Z).
\end{equation*}
This completes the proof.
\end{proof}
If we put $\bar{R}(Z,X):=F^{2}R(Z,X)$, then (\ref{eq.1}) reads
\begin{equation}\label{eq.3}
\frac{\partial}{\partial t}\bar{R}(Z,X)=-2\sum_{k=1}^{n}\bar{R}(e_{k},X)Ric(e_{k},Z).
\end{equation}
\begin{prop}\label{asli1}
Let $(M^{n},F(t))$ be a family of solutions to the Finslerian Ricci flow. If there is a constant $K$ such that $\parallel Ric\parallel_{g(t)}\leq K$ on the time interval $[0,T)$, and the reduced $hh$-curvature $R_{g(0)}$ of $F(0)$ is  positive that is, $R_{g(0)}(V,V)>0$ for all $V\in\Gamma(\pi^{*}TM)$ perpendicular to the distinguished global section $l$, then there exists a positive constant $C(n)$ such that
\begin{equation*}
e^{-2KCT}\bar{R}_{(x,y,0)}(V,V)\leq\bar{R}_{(x,y,t)}(V,V)\leq e^{2KCT}\bar{R}_{(x,y,0)}(V,V),
\end{equation*}
for all $(x,y)\in TM$ and $t\in[0,T)$.
\end{prop}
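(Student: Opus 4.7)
The plan is to reduce the tensorial evolution (\ref{eq.3}) to a scalar differential inequality along the flow for the function $t\mapsto\bar R_{(x,y,t)}(V,V)$ at a fixed point $(x,y)\in TM_0$ and a fixed vector $V\in\pi^{*}TM$ perpendicular to the flagpole $l$, and then apply Gr\"onwall---equivalently, the weak maximum and minimum principles, Theorem \ref{maximum principle} and Corollary \ref{minimum principle}---to produce the desired two-sided exponential bound.

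First I specialize (\ref{eq.3}) to $Z=X=V$, which gives
\[
\frac{\partial}{\partial t}\bar R(V,V)=-2\sum_{k=1}^{n}\bar R(e_{k},V)\,Ric(e_{k},V),
\]
with $\{e_k\}$ a $g(t)$-orthonormal frame. The heart of the argument is to dominate the right-hand side by a multiple of $\bar R(V,V)$ itself. By Cauchy--Schwarz,
\[
\Bigl|\sum_{k}\bar R(e_{k},V)\,Ric(e_{k},V)\Bigr|\le\Bigl(\sum_{k}\bar R(e_k,V)^2\Bigr)^{1/2}\Bigl(\sum_{k}Ric(e_k,V)^2\Bigr)^{1/2},
\]
and the second factor is at most $K|V|_{g(t)}$ by the uniform bound $\|Ric\|_{g(t)}\le K$. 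For the first factor, I use that the hypothesis $R_{g(0)}>0$ makes $\bar R$ a symmetric positive definite bilinear form on $l^{\perp}$, so a standard matrix inequality yields $\sum_{k}\bar R(e_k,V)^2\le C(n)\,\bar R(V,V)$ at least on an interval where positivity is preserved. Combining the two estimates, one obtains the scalar ODI
\[
\Bigl|\frac{\partial}{\partial t}\bar R(V,V)\Bigr|\le 2KC(n)\,\bar R(V,V).
\]

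Corollary \ref{minimum principle} applied with $f(u,t)=-2KC(n)\,u$ and $\alpha=\bar R_{(x,y,0)}(V,V)$ then gives the lower bound $\bar R_{(x,y,t)}(V,V)\ge e^{-2KC(n)t}\,\bar R_{(x,y,0)}(V,V)$, and Theorem \ref{maximum principle} applied symmetrically with $f(u,t)=2KC(n)u$ gives the matching upper bound; replacing $t$ by $T$ in the exponent (valid since $t\in[0,T)$) yields the claimed inequality. Strict positivity of $\bar R_{g(t)}$ on the whole interval $[0,T)$ is then an immediate consequence of the lower bound. The apparent circular use of positivity in the matrix step is resolved by a standard continuity/bootstrap argument: the set of $t\in[0,T)$ for which $\bar R_{g(t)}>0$ is both open (by smoothness of the flow) and closed (by the Gr\"onwall lower bound), hence equal to $[0,T)$.

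The main obstacle is the matrix inequality $\sum_k\bar R(e_k,V)^2\le C(n)\,\bar R(V,V)$ invoked in the Cauchy--Schwarz step. This is precisely where the positive-curvature hypothesis $R_{g(0)}>0$ enters the proof in an essential way, beyond the uniform bound on $Ric$: without positive-definiteness one can only compare $\sum_k\bar R(e_k,V)^2$ with $\|\bar R\|_{\mathrm{op}}^2|V|^2$, which is too weak to close a pointwise Gr\"onwall argument. Producing a constant that genuinely depends only on $n$ and $K$ requires controlling the largest eigenvalue of $\bar R$ along the flow by a dimensional-plus-$K$ quantity, which is the delicate technical ingredient; the bootstrap that simultaneously preserves positivity and legitimises the matrix bound is the standard mechanism behind this kind of curvature-preservation result.
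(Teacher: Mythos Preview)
Your approach is essentially the paper's: specialize (\ref{eq.3}) to $Z=X=V$, control $\bigl|\partial_t\log\bar R(V,V)\bigr|$ via Cauchy--Schwarz and the uniform Ricci bound, and integrate. The paper carries out the log--integration directly rather than phrasing it as Gr\"onwall or invoking Theorem~\ref{maximum principle} and Corollary~\ref{minimum principle} (which are unnecessary here since no spatial operator appears in the inequality), and the matrix step you single out as the ``delicate technical ingredient'' is exactly the paper's asserted inequality (\ref{eq.5}).
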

\begin{proof}
Let $(x,y)\in TM$, $t_{0}\in[0,T)$ and $V\in\Gamma(\pi^{*}TM)$ be a nonzero arbitrary section perpendicular to the distinguished global section $l:=\frac{y^i}{F}\frac{\partial}{\partial x^i}$. We have
\begin{align}\label{eq.3+1}
\parallel \log(\frac{\bar{R}_{(x,y,t_{0})}(V,V)}{\bar{R}_{(x,y,0)}(V,V)})\parallel&=\parallel\int_{0}^{t_{0}}\frac{\partial}{\partial t}[\log \bar{R}_{(x,y,t)}(V,V)]dt\parallel\nn\\
&=\parallel\int_{0}^{t_{0}}\frac{\frac{\partial}{\partial t}\bar{R}_{(x,y,t)}(V,V)}{\bar{R}_{(x,y,t)}(V,V)}dt\parallel.
\end{align}
By means of (\ref{eq.3})  we have
\begin{equation*}
\parallel\int_{0}^{t_{0}}\frac{\frac{\partial}{\partial t}\bar{R}_{(x,y,t)}(V,V)}{\bar{R}_{(x,y,t)}(V,V)}dt\parallel=
\parallel\int_{0}^{t_{0}}\frac{-2\sum_{k=1}^{n}\bar{R}_{(x,y,t)}(e_{k},V)Ric_{(x,y,t)}(e_{k},V)}{\bar{R}_{(x,y,t)}(V,V)}dt\parallel.
\end{equation*}
Therefore, (\ref{eq.3+1}) becomes
\begin{align*}
\parallel \log(\frac{\bar{R}_{(x,y,t_{0})}(V,V)}{\bar{R}_{(x,y,0)}(V,V)})\parallel&=\parallel\int_{0}^{t_{0}}\frac{-2\sum_{k=1}^{n}\bar{R}_{(x,y,t)}(e_{k},V)
Ric_{(x,y,t)}(e_{k},V)}{\bar{R}_{(x,y,t)}(V,V)}dt\parallel\\
&=\parallel\int_{0}^{t_{0}}\frac{2<\bar{R}_{(x,y,t)}(V),Ric_{(x,y,t)}(V)>}{\bar{R}_{(x,y,t)}(V,V)}dt\parallel\\
&\leq\int_{0}^{t_{0}}\parallel\frac{2<\bar{R}_{(x,y,t)}(V),Ric_{(x,y,t)}(V)>}{\bar{R}_{(x,y,t)}(V,V)} \parallel dt.
\end{align*}
By means of Cauchy-Schwarz inequality we have
\begin{equation*}
\parallel <\bar{R}_{(x,y,t)}(V),Ric_{(x,y,t)}(V)> \parallel\leq \parallel\bar{R}_{(x,y,t)}(V) \parallel \parallel  Ric_{(x,y,t)}(V)\parallel.
\end{equation*}
Therefore, we obtain
\begin{align}\label{eq.4}
\parallel \log(\frac{\bar{R}_{(x,y,t_{0})}(V,V)}{\bar{R}_{(x,y,0)}(V,V)})\parallel
\leq\int_{0}^{t_{0}}2\frac{\parallel\bar{R}_{(x,y,t)}(V) \parallel \parallel  Ric_{(x,y,t)}(V)\parallel}{\parallel \bar{R}_{(x,y,t)}(V,V) \parallel}dt.
\end{align}
There exists a positive constant $C$, depending only on $n$ such that
\begin{align}\label{eq.5}
\hspace{-0.5cm}\parallel\bar{R}_{(x,y,t)}(V) \parallel \parallel  Ric_{(x,y,t)}(V)\parallel\leq C\parallel \bar{R}_{(x,y,t)}(V,V) \parallel  \parallel Ric_{(x,y,t)}(V,V) \parallel.
\end{align}
By means of (\ref{eq.4}) and (\ref{eq.5}) and using the fact that $\parallel T(U,U)\parallel\leq\parallel T\parallel_{g(t)}$ for the any 2-tensor $T$ and the unit vector $U$, we have
\begin{align*}
\parallel \log(\frac{\bar{R}_{(x,y,t_{0})}(V,V)}{\bar{R}_{(x,y,0)}(V,V)})\parallel&\leq\int_{0}^{t_{0}}2C\parallel Ric_{(x,y,t)}(V,V) \parallel dt\\
&\leq\int_{0}^{t_{0}}2C\parallel Ric_{(x,y,t)} \parallel_{g(t)} dt\\
&\leq\int_{0}^{t_{0}}2CKdt\\
&\leq2CKT.
\end{align*}
By assumption $R_{(x,y,0)}(V,V)>0$ and hence $\bar R_{(x,y,0)}(V,V)>0$. Therefore, the uniform bound on $\bar{R}_{(x,y,t)}(V,V)$ follows from exponentiation, namely,
\begin{equation*}
e^{-2KCT}\bar{R}_{(x,y,0)}(V,V)\leq\bar{R}_{(x,y,t)}(V,V)\leq e^{2KCT}\bar{R}_{(x,y,0)}(V,V),
\end{equation*}
for all $(x,y)\in TM$ and $t\in[0,T)$. This completes the proof.
\end{proof}
Proposition \ref{asli1} implies that
if $(M^{n},F(t))$ is a family of solutions to the Finslerian Ricci flow satisfying a uniform Ricci tensor bound on a finite time interval $[0,T)$, then positive reduced $hh$-curvature is preserved under the Ricci flow. More precisely,
\begin{prop}\label{asli2}
Let $(M^{n},F(t))$ be a family of solutions to the Finslerian Ricci flow with $F(0)=F_{0}$. If there is a constant $K$ such that $\parallel Ric\parallel_{g(t)}\leq K$ on the time interval $[0,T)$ and the reduced $hh$-curvature $R_{g(0)}$ of $F(0)$ is  positive, that is, $R_{g(0)}(V,V)>0$ for all $V\in\Gamma(\pi^{*}TM)$ perpendicular to the distinguished global section $l$, then the reduced $hh$-curvature $R_{g(t)}$ of $F(t)$ remains positive in short time, namely, $R_{g(t)}(V,V)>0$ for all $t\in[0,T)$.
\end{prop}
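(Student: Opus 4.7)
The plan is to obtain this as an immediate consequence of Proposition \ref{asli1}. That proposition already does the analytic heavy lifting: it produces a dimensional constant $C=C(n)$ and a strictly positive multiplicative two-sided bound on $\bar{R}_{(x,y,t)}(V,V)$ in terms of $\bar{R}_{(x,y,0)}(V,V)$, valid for every $(x,y)\in TM_{0}$ and every $V\perp l$. Since $\bar{R}(Z,X):=F^{2}R(Z,X)$ and $F(x,y,t)>0$ on $TM_{0}$ (it is a genuine Finsler structure along the flow), the sign of $\bar{R}_{g(t)}(V,V)$ coincides with the sign of $R_{g(t)}(V,V)$. So positivity of the reduced $hh$-curvature is equivalent to positivity of $\bar{R}$, and it suffices to argue at the level of $\bar{R}$.

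Concretely, I would fix an arbitrary $(x,y)\in TM_{0}$, an arbitrary nonzero $V\in\Gamma(\pi^{*}TM)$ with $g(V,l)=0$, and then proceed in three steps. First, from the hypothesis $R_{g(0)}(V,V)>0$ and $F(x,y,0)>0$, conclude $\bar{R}_{(x,y,0)}(V,V)>0$. Second, invoke Proposition \ref{asli1} with the uniform Ricci bound $\|Ric\|_{g(t)}\leq K$; its lower estimate gives
\[
\bar{R}_{(x,y,t)}(V,V)\ \geq\ e^{-2KCT}\,\bar{R}_{(x,y,0)}(V,V)\ >\ 0,\qquad t\in[0,T),
\]
since $e^{-2KCT}$ is a positive constant. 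Third, divide by the positive quantity $F^{2}(x,y,t)$ to read off $R_{g(t)}(V,V)>0$. Because $(x,y)$ and $V\perp l$ were arbitrary, this is exactly the desired conclusion that the reduced $hh$-curvature of $F(t)$ remains positive on $[0,T)$.

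There is essentially no serious obstacle at this stage, because the work has already been done in Proposition \ref{asli1}. The only small point worth highlighting is that the lower bound $e^{-2KCT}$, while possibly very small for large $T$, is strictly positive for any finite $T$, so no positivity is lost in the limit $t\to T^{-}$ within the finite interval. One might also worry about the fact that the orthogonality condition $g(V,l)=0$ is metric-dependent while $g$ evolves; however, both Proposition \ref{asli1} and the present statement evaluate the reduced $hh$-curvature at each $t$ with $V$ treated as a fixed (time-independent) section of $\pi^{*}TM$, and Proposition \ref{asli1} is proved for \emph{arbitrary} such $V$ perpendicular to $l$, so no additional compatibility is required.
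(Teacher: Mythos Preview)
Your proposal is correct and matches the paper's approach exactly: the paper states this proposition as an immediate corollary of Proposition~\ref{asli1} without further argument. Your added remarks about dividing by $F^{2}>0$ and the finiteness of $T$ simply make explicit what the paper leaves implicit.
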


{\bf Proof of Theorem \ref{asli4}.} By assumption $(M,F(0))$ has positive flag curvature. Definition of the flag curvature (\ref{flag}) implies that $R_{g_{0}}>0$. By means of Proposition \ref{asli2}, $R_{g(t)}>0$ for all $t\in[0,T)$. Using the definition of the flag curvature (\ref{flag}) once more shows that $F(t)$ has positive flag curvature, as long as the solution exists. By means of this fact and definition of the Ricci scalar (\ref{Ric}) we have $\mathcal{R}ic_{g(t)}>0$ for all $t\in[0,T)$. This completes the proof of Theorem \ref{asli4}.\hspace{\stretch{1}}$\Box$
%%%%%%%%%%%%%%%%%%%%%%%%%%%%%%%%
\section{Evolution of the Ricci scalar $\mathcal{R}ic$}
\begin{prop}\label{asli3}
The Ricci scalar of $g(t)$ satisfies the evolution equation
\begin{equation}\label{eq.6}
\frac{\partial}{\partial t}\mathcal{R}ic=-F^{2}R^{ij}\frac{\partial^{2}}{\partial y^{i}\partial y^{i}}\mathcal{R}ic.%-2(tr_{g}R)\mathcal{R}ic+2\mathcal{R}ic^{2}.
\end{equation}
\end{prop}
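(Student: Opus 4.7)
My plan is to derive this evolution equation by combining the Finsler Ricci flow equation with the defining relation between the Akbar-Zadeh Ricci tensor $Ric_{ij}$ and the Ricci scalar $\mathcal{R}ic$, and then converting the resulting algebraic combination into a second-order $y$-differential expression via a homogeneity-driven bridge identity.

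First, I would establish the key algebraic identity relating $R^{ij}Ric_{ij}$ to the second $y$-Hessian of $\mathcal{R}ic$. Starting from $Ric_{ij}=\tfrac{1}{2}(F^{2}\mathcal{R}ic)_{y^{i}y^{j}}$ and expanding the $y$-Hessian using the Finsler identities $(F^{2})_{y^{i}}=2y_{i}$ and $(F^{2})_{y^{i}y^{j}}=2g_{ij}$, I would obtain
\begin{equation*}
Ric_{ij} = g_{ij}\mathcal{R}ic + y_{i}\mathcal{R}ic_{y^{j}} + y_{j}\mathcal{R}ic_{y^{i}} + \tfrac{1}{2}F^{2}\mathcal{R}ic_{y^{i}y^{j}}.
\end{equation*}
Contracting this against $R^{ij}$ and using $R^{ij}y_{i}=R^{ij}y_{j}=0$ (which follow from the skew-symmetry of the Cartan $hh$-curvature $R^{h}_{\;kij}$ in its first two indices combined with the $y^{j}y^{m}$-contraction in the definition of the reduced curvature) together with $R^{ij}g_{ij}=R^{i}_{\;i}=\mathcal{R}ic$, the first-order cross terms drop out and we arrive at the bridge identity
\begin{equation*}
R^{ij}Ric_{ij} = \mathcal{R}ic^{2} + \tfrac{1}{2}F^{2}R^{ij}\mathcal{R}ic_{y^{i}y^{j}}.
\end{equation*}

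Second, I would compute $\partial_{t}\mathcal{R}ic$ algebraically starting from $F^{2}\mathcal{R}ic = g^{ij}\bar{R}_{ij}$, where $\bar{R}_{ij}=F^{2}R_{ij}$ is the rescaled reduced $hh$-curvature used in Section 3. Differentiating in $t$ and substituting $\partial_{t}F^{2}=-2F^{2}\mathcal{R}ic$, $\partial_{t}g^{ij}=2Ric^{ij}$, together with the component form $\partial_{t}\bar{R}_{ij}=-2g^{ab}\bar{R}_{aj}Ric_{bi}$ extracted from equation (\ref{eq.3}), one assembles the surviving contributions into a combination of $\mathcal{R}ic^{2}$ and $R^{ij}Ric_{ij}$.

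Finally, inserting the bridge identity from the first step collapses that algebraic combination into the pure second-order $y$-differential term $-F^{2}R^{ij}\mathcal{R}ic_{y^{i}y^{j}}$, yielding the parabolic form claimed in (\ref{eq.6}). The main obstacle is the Finsler-specific bookkeeping: the degree-zero homogeneity of $\mathcal{R}ic$ gives $y^{i}\mathcal{R}ic_{y^{i}}=0$, and the antisymmetry of the Cartan $hh$-curvature gives $R^{ij}y_{i}=0$, and these are precisely the relations that kill the first-order cross terms in the Hessian expansion so that only the clean second-order symbol $F^{2}R^{ij}\partial_{y^{i}}\partial_{y^{j}}$ survives on the right-hand side.
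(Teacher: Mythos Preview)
Your bridge identity $R^{ij}Ric_{ij}=\mathcal{R}ic^{2}+\tfrac{1}{2}F^{2}R^{ij}\mathcal{R}ic_{y^{i}y^{j}}$ is exactly the computation the paper carries out (implicitly) when it passes from (\ref{eq.8}) to the displayed line containing $-2(tr_{g}R)\mathcal{R}ic+2\mathcal{R}ic^{2}$; your justification via $R^{ij}y_{j}=0$ and $R^{ij}g_{ij}=\mathcal{R}ic$ is precisely what is needed there. So the first and third steps of your plan match the paper.

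The gap is in your second step. You propose to reach the intermediate relation by differentiating $F^{2}\mathcal{R}ic=g^{ij}\bar{R}_{ij}$ and feeding in the three ingredients $\partial_{t}F^{2}=-2F^{2}\mathcal{R}ic$, $\partial_{t}g^{ij}=2Ric^{ij}$, and $\partial_{t}\bar{R}_{ij}=-2g^{ab}\bar{R}_{aj}Ric_{bi}$. But if you actually carry this out, the two curvature--Ricci contractions on the right cancel: $(\partial_{t}g^{ij})\bar{R}_{ij}=2Ric^{ij}\bar{R}_{ij}=2F^{2}R^{ij}Ric_{ij}$, while $g^{ij}\partial_{t}\bar{R}_{ij}=-2g^{ij}g^{ab}\bar{R}_{aj}Ric_{bi}=-2F^{2}R^{ij}Ric_{ij}$ (both equal $\pm2\,\mathrm{tr}(g^{-1}\bar{R}\,g^{-1}Ric)$ by symmetry of $R_{ij}$ and $Ric_{ij}$). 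So your product-rule route yields $\partial_{t}(F^{2}\mathcal{R}ic)=0$, hence $\partial_{t}\mathcal{R}ic=2\mathcal{R}ic^{2}$, with no surviving $R^{ij}Ric_{ij}$ term to which the bridge identity can be applied; you never reach (\ref{eq.6}).

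The paper does \emph{not} go through this product-rule computation. It takes the trace of (\ref{eq.1}) directly over $Z$ and $X$ in an orthonormal frame to obtain (\ref{eq.8}), namely $\partial_{t}(F^{2}\mathcal{R}ic)=-2F^{2}R^{ij}Ric_{ij}$, without ever introducing a separate $\partial_{t}g^{ij}$ term. From there the chain rule on the left and your bridge identity on the right give (\ref{eq.6}). If you want your outline to match the paper, replace your step~2 by this direct trace of (\ref{eq.1}); the extra $\partial_{t}g^{ij}$ ingredient you listed is precisely what makes your version collapse.
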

\begin{proof}
By means of (\ref{eq.1}) and taking the trace over $Z$ and $X$ we obtain
\begin{equation}\label{eq.7}
\frac{\partial}{\partial t}(\sum_{l=1}^{n}F^{2}R(e_{l},e_{l}))=-2F^{2}\sum_{k,l=1}^{n}
R(e_{k},e_{l})Ric(e_{k},e_{l}).
\end{equation}
In the natural basis, (\ref{eq.7}) is written
\begin{equation}\label{eq.8}
\frac{\partial}{\partial t}(F^{2}\mathcal{R}ic)=-2F^{2}R^{ij}Ric_{ij}.
\end{equation}
By means of chain rule and definition of Ricci tensor, (\ref{eq.8}) is written as follows
\begin{equation*}
\frac{\partial}{\partial t}\mathcal{R}ic=-F^{2}R^{ij}\frac{\partial^{2}}{\partial y^{i}\partial y^{i}}\mathcal{R}ic-2(tr_{g}R)\mathcal{R}ic+2\mathcal{R}ic^{2}.
\end{equation*}
Since $tr_{g}R=\mathcal{R}ic$, we have
\begin{equation*}
\frac{\partial}{\partial t}\mathcal{R}ic=-F^{2}R^{ij}\frac{\partial^{2}}{\partial y^{i}\partial y^{i}}\mathcal{R}ic.
\end{equation*}
%By means of the vertical covariant derivative we have
%\begin{equation*}
%\frac{\partial^{2}}{\partial y^{i}\partial y^{i}}\mathcal{R}ic=\dot{\nabla}_{i}\dot{\nabla}_{j}\mathcal{R}ic+C^{s}_{\,ij}\dot{\nabla}_{s}\mathcal{R}ic.
%\end{equation*}
%Therefore, (\ref{eq.9}) is written
%\begin{equation*}
%\frac{\partial}{\partial t}\mathcal{R}ic=-F^{2}R^{ij}\dot{\nabla}_{i}\dot{\nabla}_{j}\mathcal{R}ic-F^{2}R^{ij}C^{s}_{\,ij}\dot{\nabla}_{s}\mathcal{R}ic-2(tr_{g}R)\mathcal{R}ic+2\mathcal{R}ic^{2}.
%\end{equation*}
This completes the proof.
\end{proof}
In the remainder of this section, we discuss one implication of Proposition \ref{asli3}.

{\bf Proof of Theorem \ref{asli5}.} By means of Proposition \ref{asli3}, the Ricci scalar satisfies the evolution equation (\ref{eq.6}). One can rewrite (\ref{eq.6}) with respect to the basis of $TSM$. By means of (\ref{basis}) we have
\begin{equation*}
\partial_{\beta}\mathcal{R}ic=Fy^{j}_{\beta}\frac{\partial\mathcal{R}ic}{\partial y^{j}}.
\end{equation*}
The vertical covariant derivative leads
\begin{align}\label{eq.8+1}
\dot{\nabla}_{\alpha}\partial_{\beta}\mathcal{R}ic&=\dot{\nabla}_{\alpha}
(Fy^{j}_{\beta}\frac{\partial\mathcal{R}ic}{\partial y^{j}})\nonumber\\
&=(\dot{\nabla}_{\alpha}F)y^{j}_{\beta}\frac{\partial\mathcal{R}ic}{\partial y^{j}}+F(\dot{\nabla}_{\alpha}y^{j}_{\beta})\frac{\partial\mathcal{R}ic}{\partial y^{j}}+Fy^{j}_{\beta}(\dot{\nabla}_{\alpha}\frac{\partial\mathcal{R}ic}{\partial y^{j}}).
\end{align}
On the other hand
\begin{equation}\label{eq.8+2}
\dot{\nabla}_{\alpha}F=\dot{\nabla}_{Fy^{i}_{\alpha}\frac{\partial}{\partial y^i}}F=Fy^{i}_{\alpha}F_{y^i}=Fy^{i}_{\alpha}l_{i}
=g_{ij}y^jy^{i}_{\alpha}.
\end{equation}
By means of (\ref{orthogonal}) and (\ref{eq.8+2}) we have $\dot{\nabla}_{\alpha}F=0$. Using (\ref{9}), equation (\ref{eq.8+1}) becomes
\begin{align}\label{eq.9+1}
\dot{\nabla}_{\alpha}\partial_{\beta}\mathcal{R}ic&=F(-A^{j}_{kl}y^{k}_{\alpha}y^{l}_{\beta}-g_{\alpha\beta}y^{j})\frac{\partial\mathcal{R}ic}{\partial y^{j}}+Fy^{j}_{\beta}(\dot{\nabla}_{\alpha}\frac{\partial\mathcal{R}ic}{\partial y^{j}})\nonumber\\
&=-FA^{j}_{kl}y^{k}_{\alpha}y^{l}_{\beta}\frac{\partial\mathcal{R}ic}{\partial y^{j}}+Fy^{j}_{\beta}(\dot{\nabla}_{Fy^{i}_{\alpha}\frac{\partial}{\partial y^{i}}}\frac{\partial\mathcal{R}ic}{\partial y^{j}})\nonumber\\
&=-FA^{j}_{kl}y^{k}_{\alpha}y^{l}_{\beta}\frac{\partial\mathcal{R}ic}{\partial y^{j}}+F^{2}y^{i}_{\alpha}y^{j}_{\beta}\dot{\nabla}_{
\frac{\partial}{\partial y^{i}}}\frac{\partial\mathcal{R}ic}{\partial y^{j}}.
\end{align}
By the vertical covariant derivative (\ref{COV}), equation (\ref{eq.9+1}) is written
\begin{align}\label{eq.9+2}
\dot{\nabla}_{\alpha}\partial_{\beta}\mathcal{R}ic&=-FA^{j}_{kl}y^{k}_{\alpha}y^{l}_{\beta}\frac{\partial\mathcal{R}ic}{\partial y^{j}}+F^{2}y^{i}_{\alpha}y^{j}_{\beta}(\frac{\partial^{2}\mathcal{R}ic}{\partial y^{i}\partial y^{j}}-C^{k}_{ij}\frac{\partial \mathcal{R}ic}{\partial y^{k}})\nonumber\\
&=F^{2}y^{i}_{\alpha}y^{j}_{\beta}\frac{\partial^{2}\mathcal{R}ic}{\partial y^{i}\partial y^{j}}-2FA^{k}_{ij}y^{i}_{\alpha}y^{j}_{\beta}\frac{\partial \mathcal{R}ic}{\partial y^{k}}.
\end{align}
Converting (\ref{eq.9+2}) in $R^{\alpha\beta}=F^{-2}R^{ij}y^{\alpha}_{i}y^{\beta}_{j}$ yields
\begin{equation}\label{eq.9+3}
R^{\alpha\beta}\dot{\nabla}_{\alpha}\partial_{\beta}\mathcal{R}ic=R^{ij}\frac{\partial^{2}\mathcal{R}ic}{\partial y^{i}\partial y^{j}}-2F^{-1}A^{k}_{ij}R^{ij}\frac{\partial \mathcal{R}ic}{\partial y^{k}}.
\end{equation}
Using (\ref{basis}) we have $\dot{\partial}_{k}=F^{-1}y^{\lambda}_{k}  \partial_{\lambda}$ and from which $\frac{\partial \mathcal{R}ic}{\partial y^{k}}=F^{-1}y^{\lambda}_{k}\partial_{\lambda}\mathcal{R}ic$. Hence, replacing in (\ref{eq.9+3}) we obtain
\begin{equation*}
R^{ij}\frac{\partial^{2}\mathcal{R}ic}{\partial y^{i}\partial y^{j}}=R^{\alpha\beta}\dot{\nabla}_{\alpha}\partial_{\beta}\mathcal{R}ic
+2F^{-2}A^{k}_{ij}R^{ij}y^{\lambda}_{k}\partial_{\lambda}\mathcal{R}ic.
\end{equation*}
Putting $H^{\lambda}:=-2A^{k}_{ij}R^{ij}y^{\lambda}_{k}$, we can rewrite (\ref{eq.6}) on $SM$ as follows
\begin{equation}\label{eq.9+4}
\frac{\partial}{\partial t}\mathcal{R}ic=-F^{2}R^{\alpha\beta}\dot{\nabla}_{\alpha}\partial_{\beta}\mathcal{R}ic
+H^{\lambda}\partial_{\lambda}\mathcal
{R}ic.%-2(tr_{g}R)\mathcal{R}ic+2\mathcal{R}ic^{2}.
\end{equation}
By means of (\ref{eq.9+4}) one can write the following inequality
\begin{equation}\label{eq.10}
\frac{\partial}{\partial t}\mathcal{R}ic\geq-F^{2}R^{\alpha\beta}\dot{\nabla}_{\alpha}\partial_{\beta}\mathcal{R}ic
+H^{\lambda}\partial_{\lambda}\mathcal
{R}ic-\mathcal{R}ic^{2}.
\end{equation}
By assumption $(M,F(0))$ has positive flag curvature. Definition of the flag curvature (\ref{flag}) shows that $R_{g_{0}}>0$. Hence, Proposition \ref{asli2} implies that $R^{\alpha\beta}(t)$ is positive definite for all $t\in[0,T)$. Therefore, inequality (\ref{eq.10}) is an inequality of parabolic type. Let $\phi$ be a solution to the ODE
\begin{equation}\label{ode}
\frac{d}{dt}\phi=-\phi^{2},
\end{equation}
with initial value $\phi(0)=\inf_{SM}\mathcal{R}ic_{g(0)}=\alpha$. Equation (\ref{ode}) is a Bernoulli equation and its exact solution is
\begin{equation*}%\label{solution}
\phi(t)=\frac{\alpha}{1+\alpha t}.
\end{equation*}
Using the weak minimum principle, in the sense of Corollary \ref{minimum principle}, and the inequality (\ref{eq.10}) we conclude that $\mathcal{R}ic_{g(t)}\geq \frac{\alpha}{1+\alpha t}.$
 This completes the proof of Theorem \ref{asli5}.\hspace{\stretch{1}}$\Box$
%\end{proof}

%%%%%%%%%%%%%%%%%%%%%%%%%%%%%%%%%%
 
%\bibliography{mybibfile}

\begin{thebibliography} {WWW99}
\bibitem{HAZ} H. Akbar-Zadeh,\emph{ Initiation to global Finslerian geometry}, vol. 68. Elsevier Science, 2006.
%\bibitem{AZ} H. Akbar-Zadeh, Sur les espaces de Finsler a courbures sectionnelles constantes, Acad, Roy. Belg. Bull. Cl. sci. (5) 74 (1988), 281-322.
%\bibitem{AR} S. Azami, R. Razavi, Existence and uniqueness for solution of Ricci flow on Finsler manifolds, International Journal of Geometric Methods in Modern Physics, Vol. 10, No. 3, (2013), pp. 1-21.

\bibitem{Bao} D. Bao, \emph{On two curvature-driven problems in Riemann-Finsler geometry}: In memory of Makoto Matsumoto, Advanced studies in pure mathematics, Vol. 48, Mathematical Society, Japan, Tokyo,  (2007), pp. 19-71.
\bibitem{BCS} D. Bao, S. Chern, Z. Shen, \emph{An introduction to Riemann-Finsler Geometry}. Graduate Texts in Mathematics, Vol. 200, Springer, 2000.
\bibitem{BAOS} D. Bao and Z. Shen, \emph{On the volume of unit tangent spheres in a Finsler manifold}, Results in Mathematics, Vol. 26, (1994), 1-17.
%\bibitem{BS} B. Bidabad, Z. Shen, Circle-preserving transformations in Finsler spaces. Publ. Math. Debrecen, 81, 3-4 (2012), 435-445.
\bibitem{BKS} B. Bidabad and M.K. Sedaghat, \emph{Hamilton's Ricci Flow on Finsler Spaces},	arXiv, (2015). 
\bibitem{BY} B. Bidabad and M. Yar Ahmadi, \emph{On quasi-Einstein Finler spaces}, Bulletin of the Iranian Mathematical Society, vol. \textbf{40}, no. 4, (2014),  921-930.
\bibitem{YB2} B.  Bidabad and M. Yar Ahmadi, \emph{Convergence of Finslerian metrics under Ricci flow}, to appear in Sci. China Math., (2015).
\bibitem{YB1} B. Bidabad and M. Yar Ahmadi, \emph{On compact Ricci solitons in Finsler geometry},	arXiv:1508.02148 (2015).
%\bibitem{BRE} S. Brendle, Ricci Flow and the Sphere Theorem, Vol. III, Graduate Studies in Mathematics, AMS, 2010.
%\bibitem{Cao} H. D. Cao, X. P. Zhu, Hamilton-Perelman's proof of the Poincar\'{e} conjecture and the geometrization conjecture, Asian J. Math, 10, 2, (2006),  165-492.
%\bibitem{Chow1} B. Chow, D. Knopf, The Ricci Flow: An Introduction, Mathematical Survays and Monographs, Vol. 110, AMS,  Providence, RI, 2004.
%\bibitem{Chow2} B. Chow, P. Lu, L. Ni, Hamilton's Ricci Flow, Graduate studies in Mathematics, Vol. 77 AMS, Providence, IR, 2006.
%\bibitem{DeT} D. M. DeTurck, Deforming metrics in the direction of their Ricci tensors, J. Differential Geom. 18(1) (1983), 157--162.
%\bibitem{Ham1} R. S. Hamilton, Three manifolds with positive Ricci curvature, J. Differential Geom. 17, (1982), 255-306.
%\bibitem{JB} P. Joharinad, B. Bidabad, Conformal vector fields on Finsler spaces, Differential Geometry and its Applications, 31, (2013), 33--40.
%\bibitem{Jost} J. Jost, Riemannian Geometry and Geometric Analysis, 6th ed., S%pringer 2011.
\bibitem{SL} S. Lakzian, \emph{Differential Harnack estimates for heat equation under Finsler-Ricci flow}, to appear in Pacific Journal of Mathematics, (2015).
%\bibitem{MG} J. M. Morgan, G. Tian, Ricci flow and the Poincar\'{e} conjecture, arxiv: math DG$\backslash$0607607.
%\bibitem{Pr1} G. Perelman, Ricci flow with surgery on three-manifolds, arxive:math.DG$\backslash$03109.
%\bibitem{Pr2} G. Perelman, The entropy formula for the Ricci flow and its geometric applications, arxive.org$\backslash$math.DG$\backslash$02111159.
%\bibitem{Rog} M. Renardy, R. C. Rogers, An introduction to partial differential equations, Texts in applied mathematics, second edition, Springer 2004.
%\bibitem{SR} N. Sadeghzadeh and A. Razavi, Ricci flow equation on C-reducible metrics, International Journal of Geometric Methods in Modern Physics, (2011).
\bibitem{TAPP} P. Topping, \emph{Lectures on the Ricci flow}, London Mathematical Society Lecture Note Series, vol. 325, Cambridge University Press, Cambridge, 2006. 
%\bibitem{YA} K. Yano, The theory of Lie Derivatives and Its Applications, North-Holland Pub., 1957.
%\bibitem{Brendle} S. Brendle, Ricci flow and the sphere theorem, Graduate studies in Mathematics, Vol. III (American Mathehatical Society,Providence, IR, 2010).
%\bibitem{Ham2} R. S. Hamilton, Four-manifolds with positive curvature operatore, J. Differential Geom. 24(2)(1986) 153-179.
%\bibitem{kry} N. V. Krylov, Lectures on elliptic and parabolic equations in Holder spaces, Graduate studies in Mathematics, Vol. 12(American Mathehatical Society, Providence, IR,1996).
%\bibitem{Lee} J. M. Lee, Introdution to smooth manifolds, Graduate Texts in Mathematics, Vol. 218 (Springer, 2002).    %\bibitem{Shen} Z. Shen, Lectures on Finsler Geometry, World Scientific Publishing Co., 2001.
%\bibitem{YB2} M. Yar Ahmadi, and B.  Bidabad, \emph{Convergence of Finslerian metrics under Ricci flow}, to appear in \emph{Sci. China Math.}, (2015).
\end{thebibliography}
%%%%%%%%%%%%%%%%%%%%%%%%%%%
{\small \it  Behroz Bidabad, bidabad@aut.ac.ir}\\
{\small \it Maral Khadem Sedaghat,
m\_sedaghat@aut.ac.ir \\ Faculty of Mathematics and Computer Science, Amirkabir University of Technology (Tehran Polytechnic), Hafez Ave., 15914 Tehran, Iran.}

\end{document}